\theoremstyle{plain}
\newtheorem{thm}{\protect\theoremname}
\theoremstyle{plain}
\newtheorem{prop}{\protect\propositionname}
\theoremstyle{plain}
\newtheorem{lem}{\protect\lemmaname}
\providecommand{\lemmaname}{Lemma}
\providecommand{\propositionname}{Proposition}
\providecommand{\theoremname}{Theorem}
\providecommand{\lemmaname}{Lemma}
\providecommand{\propositionname}{Proposition}
\providecommand{\theoremname}{Theorem}
\begin{document}
\title{Faulhaber's Formula, Odd Bernoulli Numbers, and the Method of Partial
Sums\thanks{This work is licensed under the CC BY 4.0, a Creative Commons Attribution
License.}}
\author{Ryan Zielinski\\
 ryan\_zielinski@fastmail.com}
\date{4 August 2022}

\maketitle

\begin{abstract}
\noindent Let ``Faulhaber's formula'' refer to an expression for
the sum of powers of integers written with terms in n(n+1)/2. Initially,
the author used Faulhaber's formula to explain why odd Bernoulli numbers
are equal to zero. Next, Cereceda gave alternate proofs of that result
and then proved the converse, if odd Bernoulli numbers are equal to
zero then we can derive Faulhaber's formula. Here, the original author
will give a new proof of the converse using the method of partial
sums and mathematical induction. 
\end{abstract}

\section{Motivation}

If we knew nothing of the history of the problem and tried to discover
for ourselves a general expression for 
\[
\sum_{k=1}^{n}k^{m}=1^{m}+2^{m}+\cdots+n^{m},
\]
where $n,m$ are positive integers, we might notice there appear to
be two ways to write such sums. For example, 
\begin{equation}
\begin{aligned}\sum k= & \frac{n(n+1)}{2}=\frac{1}{2}n^{2}+\frac{1}{2}n,\\
\sum k^{2}= & \frac{2n+1}{3}\cdot\frac{n(n+1)}{2}=\frac{1}{3}n^{3}+\frac{1}{2}n^{2}+\frac{1}{6}n,\\
\sum k^{3}= & \left(\frac{n(n+1)}{2}\right)^{2}=\frac{1}{4}n^{4}+\frac{1}{2}n^{3}+\frac{1}{4}n^{2}.
\end{aligned}
\label{eq:1}
\end{equation}
\pagebreak{}

The next two cases are 
\begin{equation}
\begin{aligned}\sum k^{4}= & \frac{1}{5}\left[6\cdot\frac{n(n+1)}{2}-1\right]\cdot\sum k^{2}=\frac{1}{5}n^{5}+\frac{1}{2}n^{4}+\frac{1}{3}n^{3}-\frac{1}{30}n,\\
\sum k^{5}= & \frac{1}{3}\left[4\cdot\frac{n(n+1)}{2}-1\right]\left(\sum k\right)^{2}=\frac{1}{6}n^{6}+\frac{1}{2}n^{5}+\frac{5}{12}n^{4}-\frac{1}{12}n^{2}.
\end{aligned}
\label{eq:2}
\end{equation}
(When no confusion will arise, we will abbreviate $\sum_{k=1}^{n}k^{m}$
by $\sum k^{m}$.)

We can write each sum using terms of $\frac{n(n+1)}{2}$ or $n$.
Of course, if we have the former then we always can expand it into
the latter. Do we always have the former?

At a later point, reading up on the matter we would learn that writing
an expression for $\sum k^{m}$ using terms in $n$ is associated
with the name of Jakob Bernoulli (1654-1705), and writing the same
expression using terms in $\frac{n(n+1)}{2}$ is associated with that
of Johann Faulhaber (1580-1635). Bernoulli's contribution has long
overshadowed Faulhaber's, but now we know the two are linked inextricably.

\section{Background}

In order to write an expression for $\sum k^{m}$ in $n$, we introduce
the Bernoulli numbers. Set $B_{0}=1$ and define $B_{n}$ by 
\[
\sum_{k=0}^{n}\binom{n+1}{k}B_{k}=0,
\]
where $n\geq1$. Then we can write 
\begin{equation}
\sum k^{m}=\frac{1}{m+1}\sum_{j=0}^{m}\left(-1\right)^{j}\binom{m+1}{j}B_{j}n^{m+1-j}.\label{eq:3}
\end{equation}
For a sum in $\frac{n(n+1)}{2}$, for even powers we have 
\begin{equation}
\sum k^{2m}=\left[c_{0}\left(\frac{n(n+1)}{2}\right)^{m-1}+\cdots+c_{m-2}\cdot\frac{n(n+1)}{2}+c_{m-1}\right]\cdot\sum k^{2},\label{eq:4}
\end{equation}
and for odd powers we have 
\begin{equation}
\sum k^{2m+1}=\left[a_{0}\left(\frac{n(n+1)}{2}\right)^{m-1}+\cdots+a_{m-2}\cdot\frac{n(n+1)}{2}+a_{m-1}\right]\left(\sum k\right)^{2},\label{eq:5}
\end{equation}
where the $c_{i},a_{i}$ are rational numbers and $m\geq1$. We will
refer to these two expressions as ``Faulhaber's formula.''

Regarding earlier work on the topic, Edwards \cite{key-6,key-7} took
a matrix-based approach and looked for recurrence relations amongst
all such sums \eqref{eq:4} and \eqref{eq:5}. The coefficients $c_{i},a_{i}$
then followed as entries in the inverses of such matrices. Gessel
and Viennot \cite[section 12]{key-8} investigated sums of powers
and alternating sums of powers. Explicit expressions for both sets
of coefficients were derived and their combinatorial properties were
discussed. Knuth \cite{key-12} looked at a lot of material. Perhaps
the most important part was examining Faulhaber's original work and
placing it into a modern perspective.

Now we begin with the present contribution, which aimed to unravel
just what allows for writing $\sum k^{m}$ with terms in $\frac{n(n+1)}{2}$.
But, we started in the opposite direction.

If we look at the expressions in \eqref{eq:1} and \eqref{eq:2},
we notice a few powers of $n$ are missing. The reason is because
odd Bernoulli numbers are equal to zero: $B_{1}=-\frac{1}{2}$, but
for all $m\geq1$, $B_{2m+1}=0$. When we write such sums using \eqref{eq:3},
the powers of $n$ which have odd Bernoulli numbers for coefficients
drop out.

There are established ways to prove such a result (\cite[Chapters 1-2]{key-13}).
The new insight of Zielinski in \cite{key-15} was that, with the
different expressions for $\sum k^{m}$, we already have enough information
to justify such an outcome.

If we write $\sum k^{2m+1}$ in the two forms of \eqref{eq:3} and
\eqref{eq:5}, the coefficients for the terms in $n$ must agree.
Since \eqref{eq:5} contains a factor of $\left(\sum k\right)^{2}=\frac{1}{4}n^{4}+\frac{1}{2}n^{3}+\frac{1}{4}n^{2}$,
there is no term of $n$. That means the last term of \eqref{eq:3},
$-B_{2m+1}\cdot n$, must be equal to zero. In other words, $B_{2m+1}=0$
for all $m\geq1$.\footnote{If we write \eqref{eq:4} in a manner analogous to (2.5) of \cite{key-15},
we can give the same type of argument for even powers.}

To prove the converse, Cereceda \cite{key-5} chose a different line
of attack and introduced Bernoulli polynomials: 
\begin{equation}
B_{m}\left(x\right)=\sum_{j=0}^{m}\binom{m}{j}B_{j}x^{m-j},\label{eq:6}
\end{equation}
where $x$ is a real variable and $B_{m}\left(0\right)=B_{m}$. This
allows an expression for the sum of powers to be written as 
\begin{equation}
\sum k^{m}=\frac{1}{m+1}\left(B_{m+1}(n+1)-B_{m+1}\right).\label{eq:7-1}
\end{equation}

In the new context, the property $B_{2m+1}=0$ is related to Bernoulli
polynomials being evaluated at $x=\frac{1}{2}$. The notion of symmetry
allows such polynomials to be rewritten with terms of $\left(x-\frac{1}{2}\right)$,
which can be rewritten once more with terms of $\frac{x(x-1)}{2}$.
Once this is done, \eqref{eq:7-1} leads immediately to Faulhaber's
formula. (Again, \cite{key-5} contains alternate proofs of the main
result of \cite{key-15}.)

Together, the papers lead to a surprising revelation, one which has
been a long time in the making. Denote $\sum_{k=1}^{n}k^{m}$ by $S_{m}$.
Then we have 
\begin{thm}[Zielinski \cite{key-15} and Cereceda \cite{key-5}]
For positive integers $m$, 
\[
B_{2m+1}=0\iff\begin{cases}
S_{2m}= & S_{2}\cdot F_{2m}\left(S_{1}\right),\\
S_{2m+1}= & S_{1}^{2}\cdot F_{2m+1}\left(S_{1}\right),
\end{cases}
\]
where $F_{2m}\left(S_{1}\right)$ and $F_{2m+1}\left(S_{1}\right)$
are polynomials in $S_{1}=\frac{n(n+1)}{2}$. 
\end{thm}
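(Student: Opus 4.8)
The plan is to treat the biconditional as two implications. The direction ``Faulhaber's formula $\Rightarrow B_{2m+1}=0$'' is essentially the argument already sketched in the Background: if $S_{2m+1}=S_1^{2}\,F_{2m+1}(S_1)$, then $S_{2m+1}$, expanded in powers of $n$, has no linear term, whereas \eqref{eq:3} shows its linear coefficient is $-B_{2m+1}$; hence $B_{2m+1}=0$ for every $m\ge 1$ (the even formula \eqref{eq:4} is then extra information, and the footnote in the Background indicates how it too reflects the same vanishing). So the substance is the converse: assuming $B_{2j+1}=0$ for all $j\ge 1$, exhibit the polynomials $F_{2m}$ and $F_{2m+1}$.

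For the converse I would run a single strong induction on $m$, powered by two ``partial-sum'' identities. The elementary equalities $S_1(j)-S_1(j-1)=j$ and $S_1(j)+S_1(j-1)=j^{2}$ let one factor $S_1(j)^{k}-S_1(j-1)^{k}$ into $j$ times a polynomial in $j^{2}$, i.e. an odd polynomial in $j$ with binomial coefficients; summing $j=1,\dots,n$ telescopes the left side to $S_1^{\,k}$ and rewrites the right side as a rational combination of the odd power sums,
\[
S_1^{\,k}=\frac{1}{2^{k-1}}\sum_{i\ge 0}\binom{k}{2i+1}S_{2k-1-2i},
\]
with leading term $\tfrac{k}{2^{k-1}}S_{2k-1}$. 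Carrying out the same telescoping with $\bigl(j+\tfrac12\bigr)S_1(j)^{k}$, and using $\bigl(n+\tfrac12\bigr)S_1=\tfrac32 S_2$, gives $\tfrac32 S_2S_1^{\,k-1}$ on the left and a combination of the even power sums on the right, with leading term $\tfrac{2k+1}{2^{k}}S_{2k}$. Solving each for its leading term yields recurrences
\[
S_{2k-1}=\frac{2^{k-1}}{k}S_1^{\,k}-\frac1k\sum_{i\ge 1}\binom{k}{2i+1}S_{2k-1-2i},
\qquad
S_{2k}=\frac{3\cdot 2^{k-1}}{2k+1}S_2S_1^{\,k-1}-\frac{2^{k}}{2k+1}\sum_{j}\beta_jS_{2j},
\]
where both right-hand sums involve only power sums of the same parity and of strictly smaller index, and in fact of index at least $3$ (odd case), respectively at least $2$ (even case).

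The induction then closes quickly. For odd powers the base case is $S_3=S_1^{2}$; in the step, $S_1^{\,k}=S_1^{2}\cdot S_1^{\,k-2}$ for $k\ge 2$ and, by hypothesis, each $S_{2k-1-2i}$ with $i\ge 1$ equals $S_1^{2}$ times a polynomial in $S_1$, so $S_{2k-1}=S_1^{2}F_{2k-1}(S_1)$. For even powers the base case is $S_2=1\cdot S_2$; in the step, $S_2S_1^{\,k-1}$ and each $S_{2j}=S_2F_{2j}(S_1)$ are $S_2$ times a polynomial in $S_1$, so $S_{2k}=S_2F_{2k}(S_1)$. This is \eqref{eq:4} and \eqref{eq:5}. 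The hypothesis $B_{2j+1}=0$ is what keeps the picture internally consistent: under it, \eqref{eq:3} shows $S_{2p+1}$ has lowest term $n^{2}$ (its linear coefficient being $-B_{2p+1}$) and $S_{2p}$ has lowest term $n$ with no constant term --- exactly the divisibility of $S_{2p+1}$ by $S_1^{2}$ and of $S_{2p}$ by $2n+1$ (hence, with $n(n+1)\mid S_{2p}$, by $S_2$) that the recurrences reproduce. Alternatively, the even formula can be deduced from the odd one via $S_{2p+1}'(n)=(2p+1)S_{2p}(n)$, which does use $B_{2p+1}=0$, together with $\tfrac{d}{dn}S_1^{\,l}=l\,S_1^{\,l-1}\bigl(n+\tfrac12\bigr)$ and $\bigl(n+\tfrac12\bigr)S_1=\tfrac32 S_2$.

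The step I expect to be the main obstacle is the combinatorial bookkeeping behind the two telescoping expansions: one must check that $S_1(j)^{k}-S_1(j-1)^{k}$ contains only odd powers of $j$ while $\bigl(j+\tfrac12\bigr)S_1(j)^{k}-\bigl(j-\tfrac12\bigr)S_1(j-1)^{k}$ contains only even powers of $j$; that in each case the top power sum occurs with nonzero coefficient, so the recurrence can be inverted; and that the smallest index on the right never drops to $1$ or $0$, so that no $S_1$ or $S_0=n$ term --- neither of which has the required shape --- can intrude and the inductive hypothesis applies verbatim. Once these points are secured, the argument needs nothing further.
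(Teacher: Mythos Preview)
Your argument is correct, but it is not the route the paper takes. The paper's converse runs the partial-sums identity of Proposition~\ref{for-positive-integers}, $\sum k^{m+1}=(n+1)\sum k^{m}-\sum_{k}\sum_{l} l^{m}$, expands the inner sum $\sum_{l} l^{m}$ via Bernoulli's formula \eqref{eq:3}, and \emph{there} invokes $B_{2j+1}=0$ so that only power sums of a single parity survive; the Lemma's two identities then absorb the stray factor $(n+\tfrac12)$ and, in the odd-power step, manufacture a $\tfrac{c_m}{6}\sum k$ term that exactly cancels a residual $-B_{2m+2}\sum k$. Your telescoping of $S_1(j)^{k}-S_1(j-1)^{k}$ and of $(j+\tfrac12)S_1(j)^{k}-(j-\tfrac12)S_1(j-1)^{k}$ achieves the parity separation \emph{algebraically}, via $S_1(j)+S_1(j-1)=j^{2}$ and $S_1(j)-S_1(j-1)=j$, with no appeal to Bernoulli numbers at all; the only overlap with the paper's Lemma is the identity $(n+\tfrac12)S_1=\tfrac32 S_2$. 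The trade-off is that your main induction actually proves Faulhaber's formula unconditionally, so the implication ``$B_{2j+1}=0\Rightarrow$ Faulhaber'' holds simply because the consequent is true, whereas the paper's proof makes the hypothesis do visible work at the moment the double sum is expanded. Your differentiation alternative, $S_{2p+1}'=(2p+1)S_{2p}$, does restore a genuine use of $B_{2p+1}=0$ and is closer in spirit to what the paper is after.
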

In this paper we will give a different proof of the converse. Our
approach will be based on what commonly is referred to as the method
of partial sums: 
\[
\sum_{k=1}^{n}k^{m+1}=\left(n+1\right)\sum_{k=1}^{n}k^{m}-\sum_{k=1}^{n}\sum_{l=1}^{k}l^{m}.
\]
We state it in a pointed fashion to illustrate that it serves as a
generator for sums of powers (\cite[Section 2]{key-14}).\footnote{This exact form of the method of partial sums is said to go back 1,000
years to ibn al-Haytham (965-1039). Katz \cite{key-11} and Boudreaux
\cite{key-2} discuss some of the history.} In this context, when we use \eqref{eq:3} to write expressions for
$\sum\sum l^{m}$, the property $B_{2m+1}=0$ will cause the bulk
of the terms to be of even or odd parity like $\sum k^{m+1}$.

\section{Main Result}

We start with a proof of our method of partial sums. 
\begin{prop}
\label{for-positive-integers}For integers $n,m$, where $n\geq1$
and $m\geq0$, 
\[
\sum_{k=1}^{n}k^{m+1}+\sum_{k=1}^{n}\sum_{l=1}^{k}l^{m}=\left(n+1\right)\sum_{k=1}^{n}k^{m}.
\]
\end{prop}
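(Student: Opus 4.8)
The plan is to prove the identity by interchanging the order of summation in the double sum on the left-hand side. The key observation is that the pairs $(k,l)$ appearing in $\sum_{k=1}^{n}\sum_{l=1}^{k}l^{m}$ are exactly those with $1\le l\le k\le n$, so we may sum over $k$ last instead of first. That is,
\[
\sum_{k=1}^{n}\sum_{l=1}^{k}l^{m}=\sum_{l=1}^{n}l^{m}\sum_{k=l}^{n}1=\sum_{l=1}^{n}l^{m}\left(n-l+1\right).
\]
Expanding the right-hand side and splitting the sum gives $(n+1)\sum_{l=1}^{n}l^{m}-\sum_{l=1}^{n}l^{m+1}$, and moving the last term across the equality yields the proposition. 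The only points requiring care are verifying that the triangular index region is described correctly and that $\sum_{k=l}^{n}1=n-l+1$, both of which are routine.

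An alternative route, more in keeping with the inductive flavor of the rest of the paper, is induction on $n$. For the base case $n=1$ both sides equal $2$ (the left-hand side is $1^{m+1}+1^{m}$ and the right-hand side is $2\cdot 1^{m}$), valid for every $m\ge 0$. For the inductive step one assumes the identity for $n$ and adds the terms introduced when the upper limit becomes $n+1$: the first sum gains $(n+1)^{m+1}$, the double sum gains $\sum_{l=1}^{n+1}l^{m}$, and the right-hand side changes from $(n+1)\sum_{k=1}^{n}k^{m}$ to $(n+2)\sum_{k=1}^{n+1}k^{m}$; a short computation shows both increments equal $(n+2)(n+1)^{m}+\sum_{l=1}^{n}l^{m}$. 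Here the main obstacle is simply keeping track of which sub-sums change and by how much.

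I would present the interchange-of-summation argument as the primary proof, since it is shortest and makes the structure transparent, and mention the inductive argument only as a remark. Neither approach presents a genuine difficulty; the proposition is a bookkeeping identity, and the real substance of the paper lies in what is done with it afterwards.
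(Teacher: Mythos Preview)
Your interchange-of-summation argument is correct and is essentially the algebraic version of the paper's proof: the paper writes $(n+1)$ rows of $1^m+2^m+\cdots+n^m$ and splits the resulting rectangular array into two triangular pieces, one giving $\sum_{k=1}^n\sum_{l=1}^k l^m$ by rows and the other giving $\sum_{k=1}^n k^{m+1}$ by columns, which is exactly your Fubini step viewed pictorially. Your inductive alternative is sound but does not appear in the paper.
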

\begin{proof}
A way to prove $\frac{n(n+1)}{2}=\sum_{k=1}^{n}k$ is to write 
\[
n(n+1)=2\left(1+2+\cdots+n\right)
\]
and then to interpret the left side as a rectangle of area $n(n+1)$
and the right side as two pieces of $\left(1+2+\cdots+n\right)$ squares.
We will do something analogous in this case.

Let us write $n+1$ rows of 
\[
\begin{array}{c}
1^{m}+2^{m}+\cdots+\left(n-1\right)^{m}+n^{m}\\
1^{m}+2^{m}+\cdots+\left(n-1\right)^{m}+n^{m}\\
\vdots\\
1^{m}+2^{m}+\cdots+\left(n-1\right)^{m}+n^{m}\\
1^{m}+2^{m}+\cdots+\left(n-1\right)^{m}+n^{m}
\end{array}
\]
and then divide the sum into 
\begin{align*}
 &  &  &  & 1^{m}+2^{m}+\cdots+\left(n-1\right)^{m}+n^{m}\\
 & 1^{m} &  &  & 2^{m}+\cdots+\left(n-1\right)^{m}+n^{m}\\
 & 1^{m}+2^{m} &  & + & \vdots\\
 & \vdots &  &  & \left(n-1\right)^{m}+n^{m}\\
 & 1^{m}+2^{m}+\cdots+\left(n-1\right)^{m} &  &  & n^{m}\\
 & 1^{m}+2^{m}+\cdots+\left(n-1\right)^{m}+n^{m}
\end{align*}
This gives us 
\[
\left(n+1\right)\sum_{k=1}^{n}k^{m}=\sum_{k=1}^{n}\sum_{l=1}^{k}l^{m}+\sum_{k=1}^{n}k^{m+1}.
\]
\end{proof}
Before we prove the converse we state, without proof, a lemma which
points out critical, intermediate relationships. 
\begin{lem}
For positive integers $n$, 
\[
\begin{aligned}\left(n+\frac{1}{2}\right)\left(\sum k\right)^{2}= & \frac{3}{2}\cdot\frac{n(n+1)}{2}\cdot\sum k^{2},\\
\left(n+\frac{1}{2}\right)\sum k^{2}= & \left(\frac{4}{3}\cdot\frac{n(n+1)}{2}+\frac{1}{6}\right)\sum k.
\end{aligned}
\]
\end{lem}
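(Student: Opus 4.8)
The plan is to reduce both identities to the two elementary closed forms $\sum_{k=1}^{n}k=\frac{n(n+1)}{2}$ and $\sum_{k=1}^{n}k^{2}=\frac{n(n+1)(2n+1)}{6}$, which are themselves recovered from Proposition~\ref{for-positive-integers} (take $m=0$ together with $\sum_{k=1}^{n}1=n$, then $m=1$) or may simply be quoted as standard. Substituting these closed forms into each side and clearing denominators should make both equalities collapse to visibly identical polynomials in $n$, so the lemma becomes a routine algebraic verification.

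Concretely, for the first identity I would write $n+\frac{1}{2}=\frac{2n+1}{2}$, so the left side is $\frac{2n+1}{2}\left(\frac{n(n+1)}{2}\right)^{2}=\frac{(2n+1)\,n^{2}(n+1)^{2}}{8}$, while the right side is $\frac{3}{2}\cdot\frac{n(n+1)}{2}\cdot\frac{n(n+1)(2n+1)}{6}=\frac{(2n+1)\,n^{2}(n+1)^{2}}{8}$; the two agree. (Equivalently, since $\sum k\neq 0$ for $n\geq 1$, one may cancel a factor of $\sum k$ and reduce to the cleaner statement $(2n+1)\sum k=3\sum k^{2}$.) For the second identity, the one step that is not purely mechanical is the simplification $\frac{4}{3}\cdot\frac{n(n+1)}{2}+\frac{1}{6}=\frac{4n(n+1)+1}{6}=\frac{(2n+1)^{2}}{6}$; once this is noted, the right side is $\frac{(2n+1)^{2}}{6}\cdot\frac{n(n+1)}{2}=\frac{n(n+1)(2n+1)^{2}}{12}$, which matches $\frac{2n+1}{2}\cdot\frac{n(n+1)(2n+1)}{6}$ on the left.

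There is essentially no obstacle: the lemma is an identity between polynomials in the single variable $n$, so expanding both sides (or checking enough integer values) settles it. If one prefers to avoid quoting closed forms entirely, an alternative is a short induction on $n$ for each identity --- verify the base case $n=1$, then add the $(n+1)$-st terms to each sum and reduce the inductive step to a routine polynomial identity --- but the direct substitution is shorter and makes transparent that these two relations are precisely the low-order instances of the even/odd Faulhaber relations that will drive the induction to follow.
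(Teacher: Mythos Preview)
Your verification is correct: substituting the closed forms $\sum k=\tfrac{n(n+1)}{2}$ and $\sum k^{2}=\tfrac{n(n+1)(2n+1)}{6}$ reduces each identity to an equality of explicit polynomials, and the key observation $\tfrac{4}{3}\cdot\tfrac{n(n+1)}{2}+\tfrac{1}{6}=\tfrac{(2n+1)^{2}}{6}$ is exactly what makes the second one transparent. The paper itself states this lemma \emph{without proof}, so there is no approach to compare against; your argument simply fills the gap.
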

A consequence of the second relationship is that $\left(\sum k^{2}\right)^{2}$
can be rewritten using only terms of $\sum k$, which then implies,
through \eqref{eq:4}, that $\left(\sum k^{2m}\right)^{2}$ can be
rewritten in $\sum k$ as well (\cite[Section 3]{key-1}). We have

\begin{equation}
\left(\sum k^{2}\right)^{2}=G\left(S_{1}\right)\;\textrm{and}\;\left(\sum k^{2m}\right)^{2}=H\left(S_{1}\right),\label{eq:7}
\end{equation}
where $G\left(S_{1}\right)$ and $H\left(S_{1}\right)$ are polynomials
in $S_{1}=\frac{n(n+1)}{2}$.

\medskip{}

Now we give a new proof of the converse. 
\begin{prop}
If odd Bernoulli numbers are equal to zero, we can derive Faulhaber's
formula. 
\end{prop}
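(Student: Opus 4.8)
The plan is to prove, by strong induction on $m\ge 1$, the two statements $S_{2m}=S_{2}\,F_{2m}(S_{1})$ and $S_{2m+1}=S_{1}^{2}\,F_{2m+1}(S_{1})$ with $F_{2m},F_{2m+1}$ polynomials, using Proposition~\ref{for-positive-integers} as the generator: from the $(2m-1)$-st sum it produces the $(2m)$-th, and from the $(2m)$-th it produces the $(2m+1)$-st. The base case $m=1$ is exactly \eqref{eq:1}. For the inductive step the first task is to put the ``inner'' double sum into closed form: substituting \eqref{eq:3} for $\sum_{l=1}^{k}l^{p}$ and then summing over $k$ gives
\[
\sum_{k=1}^{n}\sum_{l=1}^{k}l^{p}=\frac{1}{p+1}\sum_{j=0}^{p}(-1)^{j}\binom{p+1}{j}B_{j}\,S_{p+1-j},
\]
and this is where the hypothesis enters: since $B_{2i+1}=0$ for $i\ge 1$ while $B_{1}=-\tfrac12$, only $j=0$, $j=1$, and the even $j\ge 2$ survive. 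The $j=0$ term is $\tfrac{1}{p+1}S_{p+1}$, which I would carry to the left side of Proposition~\ref{for-positive-integers}; the $j=1$ term is exactly $\tfrac12 S_{p}$, which merges with the $(n+1)S_{p}$ already present; and every surviving $S_{p+1-j}$ with $j$ even has index of the \emph{same parity as $p+1$}. Solving for $S_{p+1}$ then yields
\[
S_{p+1}=\frac{p+1}{p+2}\Bigl(n+\tfrac12\Bigr)S_{p}-\frac{1}{p+2}\sum_{\substack{j\ge 2\\ j\ \textrm{even}}}\binom{p+1}{j}B_{j}\,S_{p+1-j}.
\]

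I would apply this twice. Taking $p=2m-1$, every $S_{p+1-j}=S_{2m-j}$ on the right has even index between $2$ and $2m-2$, so the inductive hypothesis rewrites each of them as $S_{2}$ times a polynomial in $S_{1}$; moreover $(n+\tfrac12)S_{2m-1}=(n+\tfrac12)S_{1}^{2}F_{2m-1}(S_{1})=\tfrac32\,S_{1}S_{2}\,F_{2m-1}(S_{1})$ by the first identity of the Lemma. Hence $S_{2m}$ emerges as $S_{2}$ times a polynomial in $S_{1}$, the even-power half of Faulhaber's formula. Taking $p=2m$, every $S_{p+1-j}=S_{2m+1-j}$ has odd index between $1$ and $2m-1$; those of index $\ge 3$ are $S_{1}^{2}$ times a polynomial in $S_{1}$ by the inductive hypothesis, the lone term of index $1$ is $S_{1}$ itself, and $(n+\tfrac12)S_{2m}=(n+\tfrac12)S_{2}F_{2m}(S_{1})=\bigl(\tfrac43 S_{1}+\tfrac16\bigr)S_{1}\,F_{2m}(S_{1})$ by the second identity of the Lemma. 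So $S_{2m+1}$ comes out as a polynomial in $S_{1}$ that is visibly divisible by $S_{1}$, say $S_{2m+1}=S_{1}\,R(S_{1})$.

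The delicate link, and the point at which the hypothesis is used a second, decisive time, is promoting this to divisibility by $S_{1}^{2}$. By \eqref{eq:3} the coefficient of $n^{1}$ in $S_{2m+1}$ is $-B_{2m+1}$, which vanishes by assumption, so $S_{2m+1}(n)$ is divisible by $n^{2}$; since $S_{1}=\tfrac12 n(n+1)$ vanishes only to first order at $n=0$, the factor $R(S_{1})$ must vanish there too, i.e.\ $R(0)=0$, whence $R(S_{1})=S_{1}\widetilde R(S_{1})$ and $S_{2m+1}=S_{1}^{2}\widetilde R(S_{1})$. This closes the induction. I expect the routine-but-nontrivial part to be the bookkeeping of the first two paragraphs: verifying that the terms surviving in the double sum really do all carry the parity needed to feed the inductive hypothesis, and that the Lemma's two identities are precisely what is required to absorb the stray factor $(n+\tfrac12)$ back into Faulhaber form. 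The $S_{1}$-versus-$S_{1}^{2}$ step is short, but it is the one place where a little care, and the hypothesis itself, is genuinely needed.
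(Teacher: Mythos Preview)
Your proof is correct and tracks the paper's route closely: strong induction via Proposition~\ref{for-positive-integers}, expanding the inner sum by \eqref{eq:3}, using the vanishing of odd $B_{j}$ to restrict the surviving $S_{p+1-j}$ to a single parity, and then invoking the two identities of the Lemma to absorb the factor $(n+\tfrac12)$. The even-power step is handled identically. The one genuine difference lies in the odd-power step, where the preliminary $S_{1}$-divisibility must be promoted to $S_{1}^{2}$. The paper does this by an explicit cancellation: it isolates the constant term $c$ of the Faulhaber polynomial just obtained for the preceding even sum, matches it against the $n$-coefficient in \eqref{eq:3} to get $c/6=B_{2m}$, and then shows that applying the second Lemma identity produces a term $\tfrac{c}{6}\,S_{1}$ which exactly kills the stray $S_{1}$ term coming from the recursion. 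You instead invoke $B_{2m+1}=0$ directly: the vanishing $n$-coefficient of $S_{2m+1}$ forces $n^{2}\mid S_{2m+1}$, hence $R(0)=0$. Your route is shorter and makes a second, decisive use of the hypothesis at exactly that spot; the paper's version, in exchange, yields the side identity $F_{2m}(0)=6B_{2m}$ as a byproduct.
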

\begin{proof}
We proceed by mathematical induction. Expression \ref{eq:3}, with
$B_{3}=0,$ allows us to write 
\[
\begin{aligned}\sum_{k=1}^{n}k^{2}= & \frac{1}{3}n^{3}+\frac{1}{2}n^{2}+\frac{1}{6}n=\frac{2n+1}{3}\cdot\frac{n(n+1)}{2}=1\cdot\sum k^{2},\\
\sum_{k=1}^{n}k^{3}= & \frac{1}{4}n^{4}+\frac{1}{2}n^{3}+\frac{1}{4}n^{2}-0\cdot n=\left(\frac{n(n+1)}{2}\right)^{2}=1\cdot\left(\sum k\right)^{2}.
\end{aligned}
\]
We will assume \eqref{eq:4} and \eqref{eq:5} are true for all $1,2,...,m$
and then establish the case of $m+1$.

For $m+1$, Proposition \ref{for-positive-integers} tells us 
\begin{equation}
\sum_{k=1}^{n}k^{2m+2}=\left(n+1\right)\sum_{k=1}^{n}k^{2m+1}-\sum_{k=1}^{n}\sum_{l=1}^{k}l^{2m+1}.\label{eq:9}
\end{equation}
We want to write $\sum l^{2m+1}$ in $k$. Expression \eqref{eq:3}
tells us 
\[
\sum_{l=1}^{k}l^{2m+1}=\frac{1}{2m+2}\sum_{j=0}^{2m+1}\left(-1\right)^{j}\binom{2m+2}{j}B_{j}k^{2m+2-j}.
\]
Since we are assuming $B_{3}=B_{5}=\cdots=0$, we can rewrite the
double sum as 
\begin{eqnarray*}
\sum_{k=1}^{n}\sum_{l=1}^{k}l^{2m+1} & = & \frac{1}{2m+2}\sum_{k=1}^{n}\left(1\cdot B_{0}k^{2m+2}-\binom{2m+2}{1}B_{1}k^{2m+1}\right.\\
 &  & \left.+\binom{2m+2}{2}B_{2}k^{2m}+\cdots+\binom{2m+2}{2m}B_{2m}k^{2}\right)\\
 & = & \frac{1}{2m+2}\sum_{k=1}^{n}k^{2m+2}+\frac{1}{2}\sum_{k=1}^{n}k^{2m+1}+b_{2m}\sum_{k=1}^{n}k^{2m}+\cdots+b_{2}\sum_{k=1}^{n}k^{2},
\end{eqnarray*}
where $b_{2m},\ldots,b_{2}$ are rational numbers which do not interest
us. Now we can rewrite \eqref{eq:9} as 
\begin{eqnarray}
\sum k^{2m+2} & = & \left(n+1\right)\sum k^{2m+1}-\frac{1}{2m+2}\sum k^{2m+2}-\frac{1}{2}\sum k^{2m+1}\nonumber \\
 &  & -\left(b_{2m}\sum k^{2m}+\cdots+b_{2}\sum k^{2}\right).\label{eq:10}
\end{eqnarray}
By the inductive hypothesis for $\sum k^{2m}$, we can rewrite the
sum in the parentheses as 
\[
b_{2m}\sum k^{2}\cdot F_{2m}+\cdots+b_{2}\sum k^{2}\cdot F_{2},
\]
where $F_{2m},\ldots,F_{2}$ are polynomials in $\frac{n(n+1)}{2}$.
Together, this is just $F\cdot\sum k^{2}$ for another such polynomial
$F$. Now \eqref{eq:10} becomes 
\begin{equation}
\frac{2m+3}{2m+2}\sum k^{2m+2}=\left(n+\frac{1}{2}\right)\sum k^{2m+1}-F\cdot\sum k^{2}.\label{eq:11}
\end{equation}

For the next step of the proof, first we invoke the inductive hypothesis
for $\sum k^{2m+1}.$ This allows us to rewrite the right side of
\eqref{eq:11} as 
\begin{equation}
\left(n+\frac{1}{2}\right)\left(\sum k\right)^{2}G_{2m+1}-F\cdot\sum k^{2},\label{eq:12}
\end{equation}
where $G_{2m+1}$ is a polynomial in $\frac{n(n+1)}{2}$. Then we
use the lemma to rewrite the left side of \eqref{eq:12} as 
\[
\frac{3}{2}\cdot\frac{n(n+1)}{2}\cdot\sum k^{2}\cdot G_{2m+1}=G\cdot\sum k^{2},
\]
where $G$ is another polynomial in $\frac{n(n+1)}{2}$. The final
form of \eqref{eq:11} becomes 
\begin{eqnarray*}
\sum k^{2m+2} & = & \frac{2m+2}{2m+3}\left(G\cdot\sum k^{2}-F\cdot\sum k^{2}\right)=H\cdot\sum k^{2},
\end{eqnarray*}
where $H$ is a polynomial in $\frac{n(n+1)}{2}$.

\medskip{}
 The proof for $\sum k^{2m+3}$ proceeds along the same lines. We
only wish to point out an important difference when rewriting the
double sum using Bernoulli numbers. Starting with 
\[
\sum_{k=1}^{n}k^{2m+3}=\left(n+1\right)\sum_{k=1}^{n}k^{2m+2}-\sum_{k=1}^{n}\sum_{l=1}^{k}l^{2m+2},
\]
the expression analogous to \eqref{eq:11} will be 
\begin{equation}
\frac{2m+4}{2m+3}\sum k^{2m+3}=\left(n+\frac{1}{2}\right)\sum k^{2m+2}-B_{2m+2}\sum k-F\cdot\left(\sum k\right)^{2}.\label{eq:13}
\end{equation}
We need to eliminate the term of $-B_{2m+2}\sum k$, which we do as
follows.

The coefficient of $B_{2m+2}$ comes out of writing $\sum l^{2m+2}$
according to \eqref{eq:3}. If we write the same expression using
\eqref{eq:4}, which we just established, we get 
\[
\sum k^{2m+2}=\left[G+c_{m}\right]\sum k^{2}=\left[G+c_{m}\right]\cdot\frac{2n^{3}+3n^{2}+n}{6},
\]
where $G$ is a polynomial in $\frac{n(n+1)}{2}$, of degree of at
least one, and $c_{m}$ is a rational number. The coefficient for
the term of $n$ is $\frac{c_{m}}{6}$. Since both coefficients must
agree, we have $\frac{c_{m}}{6}=B_{2m+2}$.

When we invoke the lemma we get 
\begin{eqnarray*}
\left(n+\frac{1}{2}\right)\left[G+c_{m}\right]\sum k^{2} & = & \left[G+c_{m}\right]\left(\frac{4}{3}\cdot\frac{n(n+1)}{2}+\frac{1}{6}\right)\sum k\\
 & = & \left[G+c_{m}\right]\left(\frac{4}{3}\left(\sum k\right)^{2}+\frac{1}{6}\sum k\right)\\
 & = & \left[G+c_{m}\right]\cdot\frac{4}{3}\left(\sum k\right)^{2}+G\cdot\frac{1}{6}\sum k+\frac{c_{m}}{6}\sum k.
\end{eqnarray*}
Since the polynomial $G$ does not have a constant term, we can simplify
the right side to 
\begin{equation}
G^{'}\cdot\left(\sum k\right)^{2}+\frac{c_{m}}{6}\sum k,\label{eq:14}
\end{equation}
where $G^{'}$ is another polynomial in $\frac{n(n+1)}{2}$. Now the
term of $\frac{c_{m}}{6}\sum k$ cancels with that of $-B_{2m+2}\sum k$,
and \eqref{eq:13} and \eqref{eq:14} become 
\[
\frac{2m+4}{2m+3}\sum k^{2m+3}=G^{'}\cdot\left(\sum k\right)^{2}-F\cdot\left(\sum k\right)^{2},
\]
from which the desired result follows. 
\end{proof}

\section{Concluding Remarks}

\subsection{Sums in $\left(n+\frac{1}{2}\right)$}

In the preceding pages there have been hints that there is a third
way to write an expression for the sum of powers of integers, with
terms in \textbf{$\left(n+\frac{1}{2}\right)$}. This in fact is true,
and can be approached in a number of different ways (see the work
of Beardon \cite{key-1}, Burrows and Talbot \cite{key-3}, Cereceda
\cite{key-4}, and Hersh \cite{key-10}). Since we have developed
the theory of writing such sums with terms in $\frac{n(n+1)}{2}$,
we will proceed in that direction.

Let us start with even powers. We have the identity 
\begin{equation}
\left(n+\frac{1}{2}\right)^{2}=2\cdot\frac{n(n+1)}{2}+\frac{1}{4},\label{eq:15}
\end{equation}
which allows us to write 
\begin{equation}
\begin{aligned}\sum k & =\frac{n(n+1)}{2}=\frac{1}{2}\left(n+\frac{1}{2}\right)^{2}-\frac{1}{8},\\
\sum k^{2} & =\frac{2n+1}{3}\cdot\frac{n(n+1)}{2}=\left(n+\frac{1}{2}\right)\left(\frac{1}{3}\left(n+\frac{1}{2}\right)^{2}-\frac{1}{12}\right).
\end{aligned}
\label{eq:16}
\end{equation}
Using the shorthand $N=n+\frac{1}{2}$ we can rewrite \eqref{eq:4}
as 
\begin{eqnarray}
\sum k^{2m} & = & \left[c_{0}\left(\frac{1}{2}N^{2}-\frac{1}{8}\right)^{m-1}+\cdots+c_{m-1}\right]\cdot N\left(\frac{1}{3}N^{2}-\frac{1}{12}\right)\nonumber \\
 & = & \left[c_{0}^{'}N^{2\left(m-1\right)}+\cdots+c_{m-2}^{'}N^{2}+c_{m-1}^{'}\right]\cdot N\left(\frac{1}{3}N^{2}-\frac{1}{12}\right)\nonumber \\
 & = & \left[c_{0}^{''}N^{2\left(m-1\right)+2}+\cdots+c_{m-2}^{''}N^{2+2}+c_{m-1}^{''}N^{2}+c_{m}^{''}\right]\cdot N\nonumber \\
 & = & d_{0}\left(n+\frac{1}{2}\right)^{2m+1}+\cdots+d_{m-1}\left(n+\frac{1}{2}\right)^{3}+d_{m}\left(n+\frac{1}{2}\right),\label{eq:17}
\end{eqnarray}
which is an odd polynomial in $\left(n+\frac{1}{2}\right)$ with rational
coefficients $d_{i}$. For odd powers we can rewrite \eqref{eq:5}
as 
\begin{eqnarray}
\sum k^{2m+1} & = & \left[a_{0}\left(\frac{1}{2}N^{2}-\frac{1}{8}\right)^{m-1}+\cdots+a_{m-1}\right]\left(\frac{1}{2}N^{2}-\frac{1}{8}\right)^{2}\nonumber \\
 & = & \left[a_{0}^{'}N^{2\left(m-1\right)}+\cdots+a_{m-2}^{'}N^{2}+a_{m-1}^{'}\right]\left(\frac{1}{4}N^{4}-\frac{1}{8}N^{2}+\frac{1}{64}\right)\nonumber \\
 & = & e_{0}\left(n+\frac{1}{2}\right)^{2m+2}+\cdots+e_{m}\left(n+\frac{1}{2}\right)^{2}+e_{m+1},\label{eq:18}
\end{eqnarray}
which is an even polynomial in $\left(n+\frac{1}{2}\right)$ with
rational coefficients $e_{i}$.

It is possible to proceed in the opposite direction: start with \eqref{eq:17}
and \eqref{eq:18} and then derive \eqref{eq:4} and \eqref{eq:5}.
However, some care needs to be taken with the constant terms. Concerning
the coefficients $d_{i},e_{i}$, we can find explicit values in terms
of Bernoulli numbers: 
\begin{equation}
\begin{aligned}d_{i} & =\frac{1}{2\left(m-i\right)+1}\binom{2m}{2i}B_{2i}\left(\frac{1}{2}\right),\\
e_{i} & =\frac{1}{2\left(m-i\right)+2}\binom{2m+1}{2i}B_{2i}\left(\frac{1}{2}\right),
\end{aligned}
\label{eq:19}
\end{equation}
where $0\leq i\leq m$. We evaluate the Bernoulli polynomial $B_{r}\left(x\right)$
using 
\begin{equation}
B_{r}\left(\frac{1}{2}\right)=\left(2^{1-r}-1\right)B_{r}.\label{eq:20}
\end{equation}
For the lone coefficient of $e_{m+1}$ we have 
\begin{equation}
e_{m+1}=-\sum_{i=0}^{m}\frac{e_{i}}{4^{m-i+1}}.\label{eq:21}
\end{equation}
(These results were derived in a slightly different form in \cite{key-4}.)

\subsection{Partial sums for Bernoulli polynomials}

Since the expressions for sums of powers of integers have the form
of polynomials, it is common to interpret them as such. Previously
we used the notation $S_{m}=\sum_{k=1}^{n}k^{m}$. We introduce $S_{m}\left(x\right)$
to designate the polynomial of degree $m+1$ in the real variable
$x$ that is obtained by replacing $n$ by $x$ in \eqref{eq:3}.

In \cite[Theorem 2.2]{key-9}, He and Ricci derived the following
expression: 
\begin{equation}
B_{m}\left(x\right)=\left(x-\frac{1}{2}\right)B_{m-1}\left(x\right)-\frac{1}{m}\sum_{r=0}^{m-2}\binom{m}{r}B_{m-r}B_{r}\left(x\right),\label{eq:22}
\end{equation}
where $m\geq1$, and $B_{m}\left(x\right)$ refers to the Bernoulli
polynomials defined in \eqref{eq:6}. It followed as a corollary to
more general results on Appell polynomials which were derived using
differential operators. Here we will use the simpler approach of partial
sums to derive an expression for $S_{m}\left(x\right)$ which is equivalent
to \eqref{eq:22}.

Starting with Proposition \ref{for-positive-integers}, we can arrive
at 
\[
S_{2m+2}=\frac{1}{2m+3}\left(\left(2m+2\right)\left(n+\frac{1}{2}\right)S_{2m+1}-\sum_{r=1}^{2m}\binom{2m+2}{r}B_{2m+2-r}S_{r}\right),
\]
which is analogous to \eqref{eq:11}. (Note: we do not assume $B_{2m+1}=0$
and we write \eqref{eq:3} without the alternating signs.) Likewise,
an expression analogous to \eqref{eq:13} is 
\[
S_{2m+3}=\frac{1}{2m+4}\left(\left(2m+3\right)\left(n+\frac{1}{2}\right)S_{2m+2}-\sum_{r=1}^{2m+1}\binom{2m+3}{r}B_{2m+3-r}S_{r}\right).
\]
Putting them together, we get 
\[
S_{m}=\frac{1}{m+1}\left(m\left(n+\frac{1}{2}\right)S_{m-1}-\sum_{r=1}^{m-2}\binom{m}{r}B_{m-r}S_{r}\right),
\]
where $m\geq2$, with the understanding that for $m=2$, the sum on
the right side is equal to zero. Generalizing to polynomials in $x$,
we arrive at the final result of 
\begin{equation}
S_{m}\left(x\right)=\frac{1}{m+1}\left(m\left(x+\frac{1}{2}\right)S_{m-1}\left(x\right)-\sum_{r=1}^{m-2}\binom{m}{r}B_{m-r}S_{r}\left(x\right)\right).\label{eq:23}
\end{equation}
Even though we will not give the details of the proof, we point out
that \eqref{eq:22} and \eqref{eq:23} can be shown to be equivalent
to each other.

\section*{Acknowledgements}

The author thanks the referee for a careful reading of the paper,
for suggesting the topics in the concluding section, and for pointing
out additional references, in particular, the paper by He and Ricci.

\end{document}